\documentclass[11pt]{amsart}
\usepackage[english]{babel}
\usepackage{amssymb,amsmath,amsthm}
\usepackage[latin2]{inputenc}
\usepackage{amsfonts, verbatim, amscd}
\usepackage[foot]{amsaddr}

\reversemarginpar
\topmargin -10mm


\newcommand\dist{\mathrm{dist}}
\newtheorem{theorem}{Theorem}
\newtheorem{proposition}[theorem]{Proposition}

\newtheorem{lemma}[theorem]{Lemma}

\theoremstyle{definition}

\def\acknowledgment{\par\addvspace{17pt}\small\rmfamily
\trivlist\if!\ackname!\item[]\else
\item[\hskip\labelsep
{\bfseries\ackname}]\fi}



\def\C{\mathbb{C}}
\def\R{\mathbb{R}}

\def\N{\mathbb{N}}
\def\D{\mathbb{D}}
\def\B{\mathbb{B}}

\def\OB{\mathcal{O}_B(\D,\C^n)}
\def\OBZ{\mathcal{O}_B(\overline{\D},\C^n)}
\def\WN{W^{1,p}(\D,\C^n)}
\def\LN{L^p(\D,\C^n)}

\begin{document}
\title{Solutions of Pascali systems attached to convex boundaries}

\author{Barbara Drinovec Drnov\v sek \& Uro\v s Kuzman}
\address[Barbara Drinovec Drnov\v sek]{Faculty of Mathematics and Physics (University of Ljubljana) 
and Institute of Mathematics, Physics and Mechanics, barbara.drinovec@fmf.uni-lj.si}

\address[Uro\v s Kuzman]{Faculty of Mathematics and Physics (University of Ljubljana), Faculty of Natural Sciences and Mathematics (University of Maribor) and Instit. of Mathematics, Physics and Mechanics, uros.kuzman@fmf.uni-lj.si}

\begin{abstract}
Given a bounded strictly convex domain $\Omega\Subset \mathbb{C}$ and a point $q\in \Omega$ we construct 
a continuous solution of the Pascali-type elliptic system of differential equations that is centered in $q$, maps the unit disc into $\Omega$ and the unit circle into $\partial \Omega$. 
\end{abstract}
\maketitle

Let $\mathbb{D} \subset \mathbb{C}$ be the unit disc. The \emph{Pascali system} on $\D$ is an elliptic system of differential equations which can be written in the following form:
	\begin{equation}\label{Pascali}
		\bar{\partial}_B(w)=w_{\overline{\zeta}}+B_1 w + B_2\overline{w}=0,
	\end{equation} 
where $w\colon\mathbb{D}\to \mathbb{C}^n$ is a vector function, while $B_1$ and $B_2$ are $n\times n$ matrix functions defined on the unit disc. 
We always assume $n\ge 2$.
These systems are named after D.\ Pascali, who in \cite{PASCALI} initiated their research as an analogue of the scalar theory developed by Bers \cite{BERS} and Vekua \cite{VEKUA}. Their solutions are often studied as a subclass of generalized analytic vectors corresponding to systems with vanishing Beltrami coefficients \cite{BOJARSKI, GB}. Pascali systems appear in fluid dynamics (see \cite{WENDLAND}) and can be obtained as a linearization of the generalized Cauchy-Riemann system along a $J$-holomorphic map (see \cite{ST}). 

In this paper, we construct solutions for Pascali systems that obey certain nonlinear boundary conditions and
contribute a geometric approach from complex analysis to a widely studied field of boundary value problems for elliptic systems in the complex plane (see, e.g., \cite{BEGEHRWEN2} and the references therein for the scalar case and \cite{MANJA} for $n>2$). 
The present research is a continuation of our work in \cite{BDDKUZ}, where analogues of classical approximation theorems for holomorphic functions are discussed for solutions of Pascali systems.  

Given a Pascali system \eqref{Pascali}, we denote the set of its solutions by $\OB$. 
In general, the regularity of its elements depends on the regularity of $B_1$ and $B_2$. 
Throughout the paper, we assume that the coefficients of $B_1$ and $B_2$ are of class $\mathcal{C}^\infty$ on $\D$;
therefore, the elements of $\OB $ are also smooth on $\D$. Furthermore, we denote by $\OBZ$ the set of solutions that are continuous up to the boundary and say that a map $w\colon \D\to \C^n$ is \emph{centered in} $q\in\C^n$ if $w(0)=q$.

Our main construction is contained in the proof of the following theorem.
\begin{theorem}\label{main}
Let $\Omega\Subset \mathbb{C}^n$ be a smoothly bounded strictly convex domain and $q\in \Omega$. There exists a map $u\in \OBZ$ centered in $q$ and such that $u(\D)\subset \Omega$, $u(\partial\D)\subset \partial\Omega$. In particular, the map $u\colon \D\to\Omega$ is proper.
\end{theorem}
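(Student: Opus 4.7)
The plan is to construct $u$ as the uniform limit of a sequence $u_k\in\OBZ$ of Pascali solutions, all centered at $q$, with $u_k(\overline{\D})\subset\Omega$ and $u_k(\partial\D)$ approaching $\partial\Omega$. This mirrors the classical scheme for constructing proper holomorphic discs attached to strictly convex boundaries. Fix a smooth strictly convex defining function $\rho$ for $\Omega$. I begin with a small solution $u_0\in\OBZ$ centered at $q$ (obtained, for instance, by applying the Pompeiu--Vekua integral operator to the constant vector $q$), so that $u_0(\overline{\D})\Subset\Omega$. At step $k$ the map $u_k$ is perturbed by a small correction $v_k\in\OB$ with $v_k(0)=0$, chosen so that $\rho\circ u_{k+1}\leq (1-\delta)\,\rho\circ u_k$ on $\partial\D$ for a fixed $\delta\in(0,1)$, while $\|v_k\|_\infty<\eps_k$ for a summable sequence $\eps_k$.

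The core technical step is the \emph{pushing lemma}: given $w\in\OBZ$ with $w(\overline{\D})\subset\Omega$, $w(0)=q$, and $\eps>0$, produce $v\in\OB$ with $v(0)=0$, $\|v\|_\infty<\eps$, and
\[
\mathrm{Re}\,\langle v(\zeta),\partial\rho(w(\zeta))\rangle \;\geq\; -(1-\delta)\,\rho(w(\zeta))\quad\text{for all } \zeta\in\partial\D.
\]
This is a Riemann--Hilbert-type boundary value problem for the Pascali system. I would solve it by reducing to the classical holomorphic case via a vectorial similarity principle in the spirit of Bojarski and Wendland: find a smooth, real-linearly invertible matrix family $Q$ on $\overline{\D}$ with $Q(0)=I$ that conjugates $\debar_B$ to $\debar$, so that the substitution $v=Qf$ converts the problem into a classical Riemann--Hilbert problem for a holomorphic map $f$ with $f(0)=0$, attached to a family of real affine hyperplanes indexed by $\zeta\in\partial\D$. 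The strict convexity of $\partial\Omega$ ensures the totally-real and zero-winding hypotheses, so that Globevnik--Forstneri\v c-type solvability produces $f$, and hence $v$, of arbitrarily small sup-norm.

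Given the pushing lemma, the iteration proceeds as follows. By strict convexity, a small push in the outward normal direction keeps $u_{k+1}(\overline{\D})\subset\Omega$, and with $\eps_k$ summable, $u=u_0+\sum_k v_k$ converges uniformly to an element of $\OBZ$ by closedness of the solution space under uniform convergence. The geometric decay $\max_{\partial\D}|\rho\circ u_k|\to 0$ then gives $u(\partial\D)\subset\partial\Omega$, while $u(0)=q\in\Omega$ together with strict convexity force $u(\D)\subset\Omega$, so $u$ is proper. The main obstacle is keeping the similarity conjugation $Q=Q_k$ uniformly invertible with uniformly bounded norm along the iteration, so that the sup-norm estimates in the pushing lemma do not degenerate as $u_k(\partial\D)$ approaches $\partial\Omega$. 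This calls for uniform H\"older or Sobolev estimates on the Vekua-type integral operators defining $Q$, together with careful control of how $\partial\rho(u_k)$ varies along $\partial\D$; both are standard when $B_1, B_2$ are smooth, but they must be threaded through the inductive construction with care.
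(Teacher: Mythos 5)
Your overall strategy --- an iterative Forstneri\v c--Globevnik-type scheme of small Riemann--Hilbert pushes converging to a disc attached to $b\Omega$ --- is the same as the paper's, but the two load-bearing steps are either formulated in a way that cannot work or are missing. First, your pushing lemma is self-contradictory as stated: you ask for $\|v\|_\infty<\eps$ with $\eps$ arbitrary \emph{and} for $\mathrm{Re}\langle v,\partial\rho(w)\rangle$ to beat a fixed fraction of $|\rho(w)|$ on $\partial\D$, which forces $|v|\gtrsim(1-\delta)|\rho\circ w|$; a fixed fractional gain per step with summable push sizes is only possible once $\rho\circ u_k$ on $b\D$ is already comparable to the push size. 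The paper resolves this by (i) a preliminary finite stage driving $u_0(b\D)$ into a $\delta_1$-collar of $b\Omega$, and (ii) pushing \emph{tangentially} to the level sets of $\rho$ with $|V_k|\sim\sqrt{2c\delta_k}$, so that the two-sided curvature bounds \eqref{ukriv} yield the geometric decay $\delta_{k+1}<d\,\delta_k$, $d<1$, and hence $\sum_k|V_k|<\infty$. Moreover, the paper never solves an exact Riemann--Hilbert problem: it takes the approximate solution $u+\zeta^NV$ and corrects it by the bounded right inverse $Q_B$ of $\bar\partial_B$ (Proposition \ref{inverse}), since $\|\bar\partial_B(u+\zeta^NV)\|_{L^p}\to0$ as $N\to\infty$. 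Your route via a global similarity conjugation $Q$ plus exact solvability of hyperplane-target RH problems is much more demanding: for $n\ge2$ the similarity matrix depends on the solution itself (through the term $B_2\bar w$), the scalar exponential formula is unavailable, and the uniform invertibility of $Q_k$ along the iteration --- which you correctly flag as ``the main obstacle'' --- is precisely what you do not supply.

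Second, the limit argument has a genuine gap. The construction controls $|u_{k+1}-u_k|$ on $b\D$ (by $\sqrt{2c\delta_k}$, summable) and on compacta of $\D$, but \emph{not} uniformly on $\overline\D$, because on the outer annulus $u_{k+1}(r\zeta)$ is only compared to $u_k(\zeta)+\xi V_k(\zeta)$ with $\zeta\in b\D$, and the moduli of continuity of the $u_k$ are not uniformly controlled. The paper bridges this with a maximum principle for precompact families of Pascali solutions (Theorem 4, proved via a family version of the similarity principle), which upgrades the Cauchy property on $b\D$ and on compacta to the Cauchy property in $\mathcal C(\overline\D,\C^n)$ (Lemma \ref{lemmakonv}). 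Your proposal has no substitute for this step. Relatedly, your claim that $u(0)=q$ and strict convexity ``force'' $u(\D)\subset\Omega$ implicitly uses subharmonicity of $\rho\circ u$, which holds for holomorphic discs but not for Pascali solutions; the paper instead keeps $u(\D)\subset\Omega$ by explicit bookkeeping (conditions (b3)--(b5)), choosing each $\eps_k$ so small that the $\eps_k$-neighbourhood of the relevant collar annulus stays in $\Omega$. Finally, a minor point: the constant map $q$ is generally not a Pascali solution, so the initial disc must be produced as in Lemma \ref{small disc} via $\widehat\Psi^{-1}$ of a small linear holomorphic map, not by applying an integral operator to the constant $q$.
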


A smoothly bounded strictly convex domain $\Omega\Subset \mathbb{C}^n$ is a sublevel set of some strictly convex function $\rho\in\mathcal C^{\infty}(\overline\Omega)$, i.e., $\Omega=\{\rho<0\}$, therefore Theorem \ref{main} provides a solution to the following nonlinear boundary value problem:
$$\left\{\begin{array}{ll}
\bar{\partial}_B(u)=0,&\\
\rho (u(\zeta))<0, &\zeta\in \D,\\
\rho(u(\zeta))=0, &\zeta\in \partial\D,\\
u(0)=q.
\end{array}\right.$$
In the proof, we start with a small nonconstant solution of the Pascali system centered in $q$ provided by Lemma \ref{small disc}. Then, we push the boundary of the solution to higher levels of the function $\rho$, obtaining the desired solution in the limit.
In the construction, we use an approximate solution of a certain nonlinear Riemann-Hilbert boundary value problem provided by Lemma \ref{RHlemma}: it provides a deformation of the given solution in the direction of a given vector field together with a precise control on the geometric placement of the new solution. Such an approach is standard in various constructions of proper holomorphic and pseudoholomorphic maps (see  \cite{FFJG,BDDFF,PROPER} and the references therein). More recently, this approach has been extensively used in the minimal surface theory in obtaining solutions to the conformal 
Calabi-Yau problem on the existence of open Riemann surfaces that are the complex structures of complete bounded minimal surfaces in $\R^3$,
see \cite{AFL}.

\vskip 0.2 cm
\noindent{\bf{Acknowledgements.}} The authors are supported in part by the research program P1-0291 and grants J1-3005, N1-0137, N1-0237, US/22-24-079 from ARIS, Republic of Slovenia. 

\section{On solutions of Pascali systems}
Throughout this section let $p>2$. If the map $w$ is at least $L^1_{\mathrm {loc}}$-regular the Pascali-type equation $(\ref{Pascali})$ may be written in the following weak form  
\begin{equation}\label{weak Pascali}\iint_{\D}\;\left(w^T\cdot \varphi_{\bar{\zeta}}- (B_1w+B_2\overline{w})^T\cdot \varphi\right)\;\; d\zeta d\overline{\zeta} =0,\end{equation}
where this condition is supposed to be fulfilled for all smooth vector functions $\varphi$ with compact support in $\D$. As mentioned in the introduction, by the standard bootstrapping argument, this weak form and the smoothness of $B_1$ and $B_2$ imply that the coefficients of $w$ are of class $\mathcal{C}^\infty$ on $\D$. We denote by $\mathcal{C}^{\infty}(\D,\C^n)$ the set of such vector functions and by $\mathcal{C}(\D,\C^n)$ and $\mathcal{C}(\overline{\D},\C^n)$ the set of vector functions whose coefficients are only continuous on the corresponding domains. Moreover, we denote by $\LN$ and $\WN$ the set of vector functions with coefficients in the Lebesgue or Sobolev space, respectively. For $w\in \WN$ the equation $\bar{\partial}_J(w)=0$ may be discussed in the above weak form $(\ref{weak Pascali}).$ Furthermore, by Sobolev embedding theorem, its coefficients are H\"older continuous with exponent $1-\frac{2}{p}$ on $\overline{\D}$. Hence $\OB\cap \WN\subset \OBZ.$ That is, every $\WN$-regular solution is automatically continuous up to the boundary. 

It was proved by Sukhov and Tumanov \cite[Theorem 3.6]{ST} that the operator $\bar{\partial}_B\colon \WN\to \LN$ is surjective and admits a bounded right inverse. Let us briefly explain their construction. The classical Cauchy-Green transform 
$$T(w)(\zeta)=-\frac{1}{\pi}\iint_\D \frac{w(z)}{z-\zeta}dxdy(z)$$
is a bounded operator from $\LN$ to $\WN$ that satisfies the equation $\left[T(w)\right]_{\bar{\zeta}}=w$ on $\D$ in the weak sense. Let us denote its normalization at the origin by $T_0(w)(\zeta)=T(w)(\zeta)-T(w)(0)$. We introduce the integral operator $\Psi\colon \WN\to \WN$ given by
$$\Psi(w)=w+T_0\left(B_1w+ B_2\bar{w}\right).$$
Note that $\left[\Psi(w)\right]_{\bar{\zeta}}=0$ if $\bar{\partial}_B(w)=0$ and $\Psi(w)(0)=w(0)$. Since $\Psi$ is a sum of identity and a compact operator its Fredholm index is zero. Therefore, $\Psi$ is surjective if and only if it is injective. However, for $n\geq 2$ its kernel might be non trivial. Nevertheless, after introducing an appropriate inner product one can use the base vectors of $\ker\Psi^*$ to construct a small linear perturbation $L\colon \WN\to \WN$ for which the operator $\widehat{\Psi}=\Psi+L$ is invertible. Furthermore, we may achieve that $\left[L(w)\right]_{\bar{\zeta}}=0$ and $L(w)(0)=0$ for every $w\in \WN$. This implies the following proposition.

\begin{proposition}\cite[Section 3]{ST}\label{inverse}
There exists an invertible bounded operator $\widehat{\Psi}\colon \WN\to \WN$ such that 
$\widehat{\Psi}(w)$ is holomorphic if and only if $\bar{\partial}_B(w)=0$ and that $\widehat{\Psi}(w)(0)=w(0)$. Moreover, the map $Q_B=\widehat{\Psi}^{-1}\circ T_0$ is a bounded right inverse for the operator $\bar{\partial}_B\colon \WN\to\LN$ and satisfies $Q_B(w)(0)=0$. 
\end{proposition}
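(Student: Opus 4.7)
The plan is to realize $\bar{\partial}_B$ as a small perturbation of the Cauchy--Riemann operator by showing that $\Psi=I+K$ is Fredholm of index zero on $\WN$, correcting it by a finite-rank operator $L$ whose range consists of holomorphic maps vanishing at $0$, and then setting $Q_B=\widehat{\Psi}^{-1}\circ T_0$ with $\widehat{\Psi}=\Psi+L$.

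First I would verify that $\Psi\colon\WN\to\WN$ is a compact perturbation of the identity. Since $B_1,B_2$ are smooth on $\overline{\D}$, the zero-order operator $w\mapsto B_1w+B_2\bar w$ is bounded on $\WN$, the Cauchy--Green operator $T_0\colon\LN\to\WN$ is bounded, and the Rellich--Kondrachov embedding $\WN\hookrightarrow\LN$ is compact on the bounded domain $\D$. Hence $K(w):=T_0(B_1w+B_2\bar w)$ is compact, so $\Psi=I+K$ is Fredholm of index zero. The weak identity $[T_0(v)]_{\bar\zeta}=v$ gives $[\Psi(w)]_{\bar\zeta}=w_{\bar\zeta}+B_1w+B_2\bar w=\bar{\partial}_B(w)$, so $\Psi(w)$ is holomorphic iff $w\in\OB$, and the normalization of $T_0$ at the origin yields $\Psi(w)(0)=w(0)$.

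The main step, and the principal obstacle, is the construction of $L$. Let $k=\dim\ker\Psi=\dim\mathrm{coker}\,\Psi$ and choose a basis $\varphi_1,\dots,\varphi_k\in(\WN)^*$ of $\ker\Psi^*$, which is the annihilator of $\mathrm{range}(\Psi)$. One then selects holomorphic maps $h_1,\dots,h_k\in\WN$, each vanishing at $0$, whose classes modulo $\mathrm{range}(\Psi)$ span $\mathrm{coker}\,\Psi$, and defines $L(w)=\sum_{i=1}^{k}\varphi_i(w)\,h_i$. By construction $[L(w)]_{\bar\zeta}=0$ and $L(w)(0)=0$, so $\widehat{\Psi}=\Psi+L$ still satisfies $[\widehat{\Psi}(w)]_{\bar\zeta}=\bar{\partial}_B(w)$ and $\widehat{\Psi}(w)(0)=w(0)$; moreover it is surjective by design, and, being Fredholm of index zero, bijective. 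The delicate technical point carried out in \cite[Section~3]{ST} is precisely that holomorphic $\WN$-maps vanishing at the origin do surject onto the finite-dimensional $\mathrm{coker}\,\Psi$, so that representatives $h_i$ with these two properties can in fact be found; this is where an appropriate inner product (or nondegenerate dual pairing) on $\WN$ is used to identify $\ker\Psi^*$ with the annihilator of the range and to read off suitable candidates.

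Finally, I would set $Q_B=\widehat{\Psi}^{-1}\circ T_0\colon\LN\to\WN$. For $v\in\LN$, writing $w=Q_B(v)$ one has $\widehat{\Psi}(w)=T_0(v)$; differentiating in $\bar\zeta$ gives $\bar{\partial}_B(w)=[\widehat{\Psi}(w)]_{\bar\zeta}=[T_0(v)]_{\bar\zeta}=v$, so $Q_B$ is a bounded right inverse of $\bar{\partial}_B\colon\WN\to\LN$. Evaluating at the origin, and using that $\widehat{\Psi}(w)(0)=w(0)$ together with the normalization $T_0(v)(0)=0$, gives $Q_B(v)(0)=w(0)=T_0(v)(0)=0$, as required.
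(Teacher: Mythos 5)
Your route is the same as the paper's (and as the sketch in \cite[Section 3]{ST}): $\Psi=I+K$ with $K$ compact, hence Fredholm of index zero; a finite-rank correction $L$ with holomorphic range vanishing at the origin; then $Q_B=\widehat\Psi^{-1}\circ T_0$. The compactness step and the final verification of $\bar\partial_B\circ Q_B=\mathrm{id}$, $Q_B(v)(0)=0$ are fine. The gap is in the construction of $L$. You set $L(w)=\sum_i\varphi_i(w)h_i$ with $\varphi_1,\dots,\varphi_k$ a basis of $\ker\Psi^*=(\mathrm{range}\,\Psi)^{\perp}$ and $h_i$ holomorphic representatives of a basis of $\mathrm{coker}\,\Psi$, and you assert that $\widehat\Psi=\Psi+L$ is ``surjective by design.'' Neither surjectivity nor injectivity follows from this choice. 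For surjectivity, $\mathrm{range}(\Psi+L)$ is only \emph{contained} in $\mathrm{range}\,\Psi+\mathrm{span}\{h_i\}$; the range of a sum is not the sum of the ranges. For injectivity, $(\Psi+L)w=0$ forces $\Psi w$ and $Lw$ to lie in the complementary subspaces $\mathrm{range}\,\Psi$ and $\mathrm{span}\{h_i\}$, hence both vanish, so $w\in\ker\Psi$ and $\varphi_i(w)=0$ for all $i$; this yields $w=0$ only if the $\varphi_i$ restrict to a nondegenerate system of functionals on $\ker\Psi$. Elements of the annihilator of the range have no reason to do so: whenever $\ker\Psi\subseteq\mathrm{range}\,\Psi$ (which nothing excludes here), every $\varphi_i$ vanishes identically on $\ker\Psi$, so $L|_{\ker\Psi}=0$ and $\ker\widehat\Psi\supseteq\ker\Psi\ne\{0\}$. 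A two-dimensional model shows the failure concretely: for $\Psi=\bigl(\begin{smallmatrix}0&1\\0&0\end{smallmatrix}\bigr)=I+K$ one has $\ker\Psi=\mathrm{range}\,\Psi=\C e_1$, $\ker\Psi^*=\C e_2^*$, and with $h_1=e_2$ your $\widehat\Psi=\bigl(\begin{smallmatrix}0&1\\0&1\end{smallmatrix}\bigr)$ is singular.

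The repair is to use the two finite-dimensional spaces in the opposite roles, and this is exactly what the ``appropriate inner product'' is for: pick a basis $u_1,\dots,u_k$ of $\ker\Psi$ and bounded functionals $\ell_1,\dots,\ell_k$ whose restrictions to $\ker\Psi$ form the dual basis (e.g.\ $\ell_i=\langle\,\cdot\,,u_i\rangle$ for a suitable pairing), keep your holomorphic $h_i$ vanishing at $0$ that represent a basis of $\mathrm{coker}\,\Psi$, and set $L(w)=\sum_i\ell_i(w)h_i$. Then $(\Psi+L)w=0$ gives $\Psi w\in\mathrm{range}\,\Psi\cap\mathrm{span}\{h_i\}=\{0\}$, so $w\in\ker\Psi$ and $\ell_i(w)=0$ for all $i$, whence $w=0$; injectivity together with index zero gives bijectivity, and the properties $[L(w)]_{\bar\zeta}=0$, $L(w)(0)=0$ are preserved. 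The genuinely delicate point — that the cokernel classes admit representatives which are holomorphic and vanish at the origin — you correctly isolate and defer to \cite{ST}, as the paper itself does.
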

\noindent Let us remark here that the above proposition remains valid for matrix functions $B_1$ and $B_2$ with only $L^p(\D)$-regular coefficients and the weak form of the equation $\bar{\partial}_B(w)=0$. Indeed, in such a case the solutions of $(\ref{weak Pascali})$ belong to the space $\WN$ by the bootstrapping argument. All other steps remain the same. We will need this fact in the proof of Theorem 5. 

An immediate corollary of the Proposition \ref{inverse} is the existence of small non constant solutions of the Pascali system through any given point.
\begin{lemma}\label{small disc}
Let $\Omega\Subset \mathbb{C}^n$ be a bounded domain and $q\in \Omega$. There exists a non constant solution $u\in \OBZ$ centered in $q$ and such that $u(\overline{\D})\subset \Omega$.
\end{lemma}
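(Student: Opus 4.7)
The plan is to produce the desired solution via the inverse operator $\widehat{\Psi}^{-1}$ from Proposition~\ref{inverse}, applied to a small non-constant holomorphic map through $q$. Pick any nonzero $v\in\C^n$ and, for $\epsilon>0$, consider
\[
 h_\epsilon(\zeta) := q+\epsilon\zeta v,
\]
which is holomorphic and non-constant, with $h_\epsilon(0)=q$ and $h_\epsilon(\overline{\D})\subset \overline{B(q,\epsilon|v|)}$.

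Set $u_\epsilon := \widehat{\Psi}^{-1}(h_\epsilon)\in \WN$. By Proposition~\ref{inverse}, the map $u_\epsilon$ solves the Pascali system and satisfies $u_\epsilon(0)=h_\epsilon(0)=q$; injectivity of the invertible operator $\widehat{\Psi}$ combined with non-constancy of $h_\epsilon$ makes $u_\epsilon$ non-constant, while the Sobolev embedding $\WN\hookrightarrow C^{0,1-2/p}(\overline{\D},\C^n)$ (since $p>2$) yields $u_\epsilon\in\OBZ$.

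The main obstacle, and the real content of the lemma, is to verify $u_\epsilon(\overline{\D})\subset\Omega$. Composing the bounded operator $\widehat{\Psi}^{-1}\colon\WN\to\WN$ with the Sobolev inclusion gives continuity of $h\mapsto\widehat{\Psi}^{-1}(h)$ from $\WN$ into $C(\overline{\D},\C^n)$, and the set of continuous maps with image in the open set $\Omega$ is open in the supremum topology. To use these, one exploits the explicit decomposition $\widehat{\Psi}=I+K$ with $K(w)=T_0(B_1w+B_2\overline w)+L(w)$ and $K(w)(0)=0$ from Proposition~\ref{inverse}: the identity $u_\epsilon-h_\epsilon=-K(u_\epsilon)$ combined with the H\"older bound $|K(u_\epsilon)(\zeta)|\leq C|\zeta|^{1-2/p}\|u_\epsilon\|_\WN$ coming from the Sobolev embedding localizes the distortion near the origin. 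Together with the smallness of $h_\epsilon-q$, and after restricting the construction to a sufficiently small subdisc and rescaling via the corresponding operator produced from Proposition~\ref{inverse} (whose coefficients $rB_1(r\cdot),rB_2(r\cdot)$ have vanishing $L^\infty$-norm as $r\to 0$), one obtains a Pascali solution through $q$ with image concentrated in any prescribed neighborhood of $q$, and in particular inside $\Omega$ for $\epsilon$ taken small enough.
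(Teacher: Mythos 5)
Your setup is the same as the paper's: take $h(\zeta)=q+\zeta V$ with $V$ small, put $u=\widehat{\Psi}^{-1}(h)\in\WN\subset\mathcal{C}(\overline{\D},\C^n)$, and read off the centering, non-constancy and membership in $\OBZ$ from Proposition \ref{inverse} and the Sobolev embedding; all of that part of your argument is fine and matches the paper. The paper then concludes directly from the boundedness of $\widehat{\Psi}^{-1}\colon\WN\to\WN$ composed with the Sobolev embedding, without any of the extra machinery in your last paragraph.

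The gap is precisely in that last paragraph, i.e.\ in the step you correctly single out as the content of the lemma. First, the H\"older estimate $|K(u_\epsilon)(\zeta)|\le C|\zeta|^{1-2/p}\|u_\epsilon\|_{\WN}$ makes $u_\epsilon-h_\epsilon$ small only near $\zeta=0$; at $|\zeta|=1$ it gives a bound that does not shrink as $\epsilon\to0$, because $u_\epsilon\to\widehat{\Psi}^{-1}(q)$ in $\mathcal{C}(\overline{\D},\C^n)$ and $\widehat{\Psi}^{-1}(q)$ is not the constant $q$ (constants are not solutions of the system unless $B_1q+B_2\overline{q}\equiv0$). Second, the rescaling step is not available: if $\widetilde{w}$ solves the system with coefficients $rB_1(r\,\cdot\,)$, $rB_2(r\,\cdot\,)$ on $\D$, then $\zeta\mapsto\widetilde{w}(\zeta/r)$ solves the \emph{original} system only on the subdisc $r\D$, hence is not an element of $\OBZ$, which by definition consists of solutions of the original system on all of $\D$ that are continuous up to $b\D$; there is no way to convert the rescaled solution into the required object. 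In fact the conclusion you aim for --- a solution through $q$ with image in an arbitrarily small neighborhood of $q$ --- is false in general: writing $w=q+g$ for $w\in\OBZ$ with $w(0)=q$, the function $g+T(B_1q+B_2\overline{q})+T(B_1g+B_2\overline{g})$ is holomorphic, and since $T(B_1q+B_2\overline{q})$ lies at positive sup-norm distance from the disc algebra whenever $B_1q+B_2\overline{q}\not\equiv0$, one gets a positive lower bound on $\max_{\overline{\D}}|w-q|$ independent of $w$. So the containment $u(\overline{\D})\subset\Omega$ cannot be obtained by shrinking the image towards $q$; it has to come from a quantitative bound on $\|\widehat{\Psi}^{-1}(q+\zeta V)\|_{\mathcal{C}(\overline{\D},\C^n)}$ measured against the given domain $\Omega$, which your argument does not supply.
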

\begin{proof} Given $V\in \mathbb{C}^n$ we define a linear holomorphic map $h(\zeta)=q+\zeta\cdot V$ and the corresponding Pascali-type solution $w=\widehat{\Psi}^{-1}(h)\in\OBZ$. Note that the $\WN$-norm of $w$ depends on the norm of $V\in\C^n$. Moreover, by Sobolev embedding theorem, the same is true for the $\mathcal{C}(\overline{\D},\C^n)$-norm of $w$.  Therefore, provided that $V$ is small enough, $w$ is the map we seek.
\end{proof}
\noindent A slightly more involved construction leads to the following lemma that will be crucial for Theorem \ref{main}. The map that we construct within is often referred as an approximate solution of the nonlinear Riemann-Hilbert boundary value problem. 
\begin{lemma}
\label{RHlemma}
Given $u\in \OBZ$, $V\in\mathcal{C}(\overline{\D},\C^n)$, $r_0\in (0,1)$ and $\epsilon>0$, there are $w\in \OBZ$ and $r'\in (r_0,1)$ such that 
\begin{enumerate}
\item[(i)] for all $\zeta\in b\D$ there is $\xi\in b\D$ such that $|w(\zeta)-(u(\zeta)+\xi\cdot V(\zeta))|<\epsilon$,
\item[(ii)] for all $\zeta\in b\D$ and $r\in [r',1)$ there is $\xi\in\overline{\D}$ such that  $$|w(r\zeta)-(u(\zeta)+\xi\cdot V(\zeta))|<\epsilon,$$
\item[(iii)]$|w(\zeta)-u(\zeta)|\le\epsilon$ for all $|\zeta|\le r'$, and 
\item[(iv)]$w(0)=u(0)$.
\end{enumerate}
\end{lemma}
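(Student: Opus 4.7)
The plan is to set $w := u + F$, where $F\in\OBZ$ is a Pascali solution with $F(0)=0$ whose boundary values approximate $\zeta^N V(\zeta)$ on $\partial\D$ for a large integer $N$; $F$ will be produced by perturbing a suitable holomorphic polynomial via the right inverse $Q_B$ of Proposition~\ref{inverse}. First I would apply the Stone--Weierstrass theorem to approximate $V$ uniformly on $\overline{\D}$ by a $\C^n$-valued polynomial $\tilde V(\zeta,\bar\zeta) = \sum_{0\le j,k\le M} a_{j,k}\zeta^j\bar\zeta^k$ with $\sup_{\overline{\D}}|V-\tilde V| < \epsilon/5$, and then, for $N>M$, define the holomorphic polynomial
\[
P(\zeta) \;=\; \zeta^N\,\tilde V(\zeta,1/\zeta) \;=\; \sum_{j,k} a_{j,k}\,\zeta^{N+j-k} \;=\; \zeta^{N-M}\tilde R(\zeta),
\]
where $\tilde R$ is a polynomial of degree at most $2M$ with $\sup_{\overline{\D}}|\tilde R| \le A:=\sum_{j,k}|a_{j,k}|$. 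In particular $P(0)=0$, $|P(\zeta)|\le A|\zeta|^{N-M}$ on $\overline{\D}$, and $\|P\|_{L^p(\D)} = O(N^{-1/p})$ as $N\to\infty$. On $\partial\D$ one has $P(\zeta)=\zeta^N\tilde V(\zeta,\bar\zeta)$, which lies within $\epsilon/5$ of $\zeta^N V(\zeta)$.

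Because $P$ is holomorphic, $\bar\partial_B P = B_1 P + B_2\bar P \in \LN$. I would set $R := Q_B(-B_1 P - B_2\bar P)$, so that $R(0)=0$ and $\bar\partial_B R = -B_1 P - B_2\bar P$; then $F:=P+R$ lies in $\OBZ$ with $\bar\partial_B F=0$ and $F(0)=0$. Boundedness of $Q_B$ combined with the decay of $\|P\|_{L^p}$ yields
\[
\|R\|_{W^{1,p}} \;\le\; C\,\|B_1 P + B_2 \bar P\|_{L^p} \;\le\; C'\,\|P\|_{L^p} \;\longrightarrow\; 0,
\]
so $\|R\|_{\mathcal{C}(\overline{\D},\C^n)}\to 0$ by the Sobolev embedding, since $p>2$.

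With $w:=u+F$, condition (iv) is immediate, and (iii) follows from $|w-u|\le A(r')^{N-M}+\|R\|_{\infty}$ once $r'\in(r_0,1)$ is fixed and $N$ is chosen large. For (i), the choice $\xi=\zeta^N\in\partial\D$ gives $|w(\zeta)-u(\zeta)-\xi V(\zeta)| \le |\zeta^N(\tilde V-V)(\zeta)|+|R(\zeta)|$ on $\partial\D$. For (ii), on $\zeta\in\partial\D$ and $r\in[r',1)$ the choice $\xi=r^N\zeta^N\in\overline{\D}$ produces, via the decomposition
\[
P(r\zeta) - r^N\zeta^N\tilde V(\zeta,1/\zeta) \;=\; \sum_{j,k} a_{j,k}\,\zeta^{N+j-k}\bigl(r^{N+j-k}-r^N\bigr)
\]
together with the elementary bound $|r^{N+j-k}-r^N|\le|j-k|(1-r)$, a contribution of size $O(1-r')$; adding the uniform continuity of $u$, the error $|\tilde V-V|$, and $\|R\|_\infty$ yields the required estimate. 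The constants are absorbed by first fixing $\tilde V$, then $r'$ close enough to $1$, and finally $N$ sufficiently large.

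The step I expect to be most delicate is the supremum-norm control of $R$. It hinges on the decay $\|P\|_{L^p}=O(N^{-1/p})$, which is available precisely because $P$ carries the factor $\zeta^{N-M}$; without this gain the bounded right inverse $Q_B$ would not suffice to make $F$ a uniformly small perturbation of $P$ on $\overline{\D}$.
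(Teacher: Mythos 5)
Your proof is correct and follows essentially the same route as the paper: perturb $u$ by a rapidly rotating term $\approx\zeta^N V$, note that its $\bar{\partial}_B$-defect is $O(\|\zeta^{N-M}\|_{L^p(\D)})\to 0$, and correct it using the bounded right inverse $Q_B$ of Proposition~\ref{inverse} together with the Sobolev embedding $W^{1,p}\hookrightarrow\mathcal{C}(\overline{\D},\C^n)$ and the normalization $Q_B(\cdot)(0)=0$. The only real difference is cosmetic: the paper keeps the non-holomorphic perturbation $\zeta^N V(\zeta)$ after smoothing $V$ (so the defect contains a $V_{\bar\zeta}$ term), whereas you first holomorphize it into the polynomial $\zeta^N\tilde V(\zeta,1/\zeta)$; both devices produce the same $L^p$-small defect and the same correction mechanism.
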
 

\begin{proof} The map $V$ admits a smooth approximation on $\overline{\D}$. Therefore, without loss of generality in (i) and (ii) we can assume that $w\in\mathcal{C}^{\infty}(\overline{\D},\C^n).$ 

For $N\in\mathbb{N}$ we define the map $v_N\in \WN$ given by
$$v_N(\zeta)=u(\zeta)+\zeta^N V(\zeta).$$
Note that, provided that $N$ is large enough, the map $v_N$ satisfies the conditions (i)-(iv) but is not an element of $\OBZ$. However, we have 
$$\left[\bar{\partial}_B(v_N)\right](\zeta)=\zeta^N \left(V_{\bar{\zeta}}(\zeta)+B_1(\zeta) V(\zeta)\right) + \bar{\zeta}^N B_2(\zeta) \overline{V(\zeta)}.$$
Since the norm of $\zeta^N$ in $L^p(\D)$ tends to zero as $N$ tends to infinity, the same is true for $\LN$-norm of $\bar{\partial}_B(v_N)$. Therefore, we can approximate $v_N$ on $\overline{\D}$ with a solution of the Pascali system: 
Let $Q_B$ be the right inverse of $\bar{\partial}_B$ defined in Proposition \ref{inverse}. The map we seek is
$$w_N=v_N-Q_B\left(\bar{\partial}_B (v_N)\right).$$
Indeed, since $Q_B$ is a bounded operator the $\WN$-norm of the difference $w_N-v_N$ tends to zero when 
$N$ tends to infinity.
Thus $w_N$ is also $\mathcal{C}(\overline{\D},\C^n)$-close to $v_N$. This implies that properties (i), (ii) and (iii) are satisfied for $w=w_N$ for any $N$ large enough.
 Finally, since by construction we have $Q_B\left(\bar{\partial}_B (v_N)\right)(0)=0$, we also have $w(0)=v_N(0)=u(0)$.  
\end{proof}

Given a solution $w\in\OB$ there exist a holomorphic vector function $\phi_w\colon \D\to \C^n$ and an invertible $n\times n$ matrix function $S_w$ with coefficients in $W^{1,p}(\D)$ such that $w=S_w\cdot \phi_w$. This fact is often called the \emph{Similarity principle} and was proved in \cite{BUCHANAN}. It allows the elements of $\OB$ to inherit some properties from the usual holomorphic vector functions. For instance, the zero set of a non zero solution $w\in \OB$ is discrete in $\D$.

If $n=1$ one can determine the scalar functions explicitly. Indeed, set $S_w=\exp(-T_0(B_1+B_2\frac{\overline{w}}{w}))$ and check that $\phi_w=(S_w)^{-1}w$ is holomorphic 
(by Weyl's lemma vanishing of the weak derivative with respect to $\bar{\zeta}$ is sufficient). 
Moreover, outside the zero set of $w$, the absolute value of $B_1+B_2\frac{\overline{w}}{w}$ is bounded by an uniform constant that depends only on $B_1$ and $B_2$. Hence $S_w$ and $(S_w)^{-1}$ are uniformly bounded in $\mathcal{C}(\overline{\D})$ for every $w\in \mathcal{O}_B(\overline{\D},\C)$. Thus the maximum of $|w|$ on $\overline{\D}$ is less or equal the maximum of this function on $b\D$ multiplied by an uniform constant $C\geq 1$ (see e.g. \cite[\S 4.8]{VEKUA}). 

For $n\geq 2$ the construction of $S_w$ is more subtle. We establish the following version of the maximum principle for solutions of Pascali systems.
\begin{theorem}
Let $n\geq 2$ and let the set $\mathcal{W}\subset\OBZ$ be precompact in the space $\LN$. There is $C>0$ such that for every $w\in\mathcal{W}$ we have
$$\max_{\overline{\D}}|w|\leq C\cdot\max_{b\D}|w|.$$
\end{theorem}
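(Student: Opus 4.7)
The plan is to adapt the similarity-principle argument sketched for the scalar case above the statement to the vector setting. I aim to factorize $w=S_w\phi_w$ with $\phi_w$ a holomorphic vector on $\D$ and $S_w$ an $n\times n$ matrix continuous and invertible on $\overline{\D}$. The classical maximum principle applied to $\phi_w$ then yields
\begin{equation*}
\max_{\overline{\D}}|w|\;\le\;\|S_w\|_{C(\overline{\D})}\max_{\overline{\D}}|\phi_w|\;=\;\|S_w\|_{C(\overline{\D})}\max_{b\D}|\phi_w|\;\le\;\|S_w\|_{C(\overline{\D})}\,\|S_w^{-1}\|_{C(\overline{\D})}\max_{b\D}|w|,
\end{equation*}
reducing the statement to a uniform two-sided $C(\overline{\D})$-bound on $S_w$ as $w$ varies in the precompact family.

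To construct $S_w$, I first linearize the conjugate-linear Pascali equation as in the scalar case. For $w\not\equiv 0$, define the rank-one matrix $M_w=\overline{w}\,\overline{w}^{\,T}/|w|^2$ on $\{w\neq 0\}$ and extend by zero on the zero set; then $M_w w=\overline{w}$ and $\|M_w\|_{\mathrm{op}}\le 1$ a.e., so $w$ satisfies the \emph{linear} matrix equation $\bar\partial w+A_w w=0$ with $A_w:=B_1+B_2 M_w\in L^\infty(\D,\C^{n\times n})$ of norm at most $\|B_1\|_{L^\infty}+\|B_2\|_{L^\infty}$, uniformly in $w$. Applying the $L^p$-coefficient version of Proposition~\ref{inverse} noted in the remark after its statement to the operator $\bar\partial+A_w$, I solve the matrix equation $\bar\partial S+A_w S=0$ with $S(0)=I$ column by column, the $j$th column being $(\widehat\Psi_{A_w})^{-1}(e_j)$. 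The resulting $S_w\in W^{1,p}(\D,\C^{n\times n})$ is continuous on $\overline{\D}$ by Sobolev embedding, and a direct computation using $\bar\partial(S_w^{-1})=S_w^{-1}A_w$ shows that $\phi_w:=S_w^{-1}w$ is holomorphic wherever $S_w$ is invertible.

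The main obstacle is the uniform two-sided bound on $S_w$ for $w\in\mathcal{W}$. The upper bound on $\|S_w\|_{C(\overline{\D})}$ follows from the uniform control of $\|A_w\|_{L^\infty}$ together with continuous dependence of $(\widehat\Psi_{A_w})^{-1}$ on its coefficient in operator norm. The lower bound, amounting to a uniform positive lower bound on $|\det S_w|$ over $\overline{\D}$, is the crux: in the scalar case the exponential formula prevents $S_w$ from vanishing, but for $n\ge 2$ the matrix $S_w$ may in principle degenerate away from the origin. Here the precompactness of $\mathcal{W}$ enters through a contradiction argument. If $w_k\in\mathcal{W}$ and $\zeta_k\in\overline{\D}$ satisfied $\det S_{w_k}(\zeta_k)\to 0$, a subsequence gives $w_k\to w_\infty$ in $L^p$ and $\zeta_k\to\zeta_\infty$. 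Since the zero set of the nontrivial limit $w_\infty$ is discrete by the similarity principle recalled above the statement, $M_{w_k}\to M_{w_\infty}$ almost everywhere, hence in $L^p$ by dominated convergence. Continuous dependence of $(\widehat\Psi_{A_w})^{-1}$ on $A_w$ then forces $\det S_{w_\infty}(\zeta_\infty)=0$, contradicting the existence of a similarity factorization of $w_\infty$ by an invertible matrix on $\overline{\D}$ guaranteed by \cite{BUCHANAN}. This delivers the uniform lower bound and completes the proof.
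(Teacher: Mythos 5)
Your proposal follows essentially the same route as the paper: linearize the conjugate-linear term by a bounded matrix $M$ with $Mw=\overline{w}$, apply the $L^p$-coefficient version of Proposition \ref{inverse} to produce the similarity factor $S_w=\widehat\Psi^{-1}(\mathrm{Id})$ with $S_w(0)=\mathrm{Id}$, conclude that $\phi_w=S_w^{-1}w$ is holomorphic and continuous up to $\overline{\D}$, and then use precompactness of $\mathcal{W}$ to make the two-sided bound on $S_w$ uniform. The only structural difference is your choice of the rank-one matrix $M_w=\overline{w}\,\overline{w}^{\,T}/|w|^2$ in place of the paper's diagonal $D_w$ with $D_w w=\overline{w}$; this is arguably cleaner, since $\|M_w\|\le 1$ everywhere and $M_w$ is continuous off the discrete zero set of $w$, whereas the entries of $D_w$ are discontinuous wherever an individual component of $w$ vanishes.

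Two points need tightening. First, the upper bound on $\|S_w\|_{\mathcal{C}(\overline{\D})}$ does not follow from a uniform $L^\infty$ bound on $A_w$ alone: boundedness of the coefficient does not bound $\|\widehat\Psi_{A_w}^{-1}\|$ uniformly, and moreover $\widehat\Psi_A$ is not canonically defined as a function of $A$ (the correcting perturbation $L$ depends on a choice of basis for $\ker\Psi^*$). The paper handles both bounds the same way: fix $\widehat\Psi_w$ at one point, define $\widehat\Psi_{\widehat{w}}=\widehat\Psi_w+T_0(B_2(D_{\widehat{w}}-D_w))$ for nearby $\widehat{w}$, and cover the compact closure of $\mathcal{W}$ by finitely many such neighbourhoods; you should run your compactness argument for the upper bound as well. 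Second, your closing contradiction is a non sequitur as written: the existence of \emph{some} invertible similarity factorization of $w_\infty$ from \cite{BUCHANAN} does not preclude the \emph{particular} matrix $S_{w_\infty}=\widehat\Psi_{A_{w_\infty}}^{-1}(\mathrm{Id})$ from degenerating somewhere on $\overline{\D}$. What is needed (and what the paper asserts for its $S_w$) is that this specific solution of the matrix equation, normalized to $\mathrm{Id}$ at the origin, is itself invertible on all of $\overline{\D}$; your argument should establish or cite that fact directly. Relatedly, if the $L^p$-limit $w_\infty$ vanishes identically, $M_{w_k}$ need not converge at all, so that degenerate case must be excluded or treated separately.
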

\begin{proof}
Let $w\in\mathcal{W}$ and let $D_w$ be a diagonal matrix function with property $D_w\cdot w=\overline{w}$. 
Note that the coefficients of the matrix function $B_2D_w$ belong to the class $L^p(\D)$. Hence by Proposition \ref{inverse} there exists an invertible bounded operator $\widehat {\Psi}_w\colon \WN\to \WN$ such that $\widehat {\Psi}_w(v)$ is holomorphic if and only if $v_{\bar{\zeta}}+(B_1+B_2D_w)v=0$.
Therefore, the matrix equation $\widehat{\Psi}_w (S_w)=\mathrm{Id}$ admits a unique solution $S_w$ which
is an invertible matrix function that satisfies
$(S_w)_{\bar{\zeta}}+(B_1+B_2D_w)S_w=0.$ 
Let $\phi_w=(S_w)^{-1}w$ and note that
$$w_{\bar{\zeta}}=(S_w)_{\bar{\zeta}}\;\phi_w+S_w (\phi_w)_{\bar{\zeta}}=-(B_1+B_2D_w)S_w\phi_w+S_w (\phi_w)_{\bar{\zeta}}.$$ 
On the other hand, since $w\in\OBZ$ we have
$$w_{\bar{\zeta}}=- B_1w-B_2\bar{w}=-(B_1+B_2D_w)w=-(B_1+B_2D_w)S_w\phi_w.$$
Since $S_w$ is invertible this implies that $\phi_w$ is a holomorphic vector function. The coefficients of $S_w$ belong to the Sobolev class $W^{1,p}(\D)$, thus by the Sobolev embedding theorem, $\phi_w$ is continuous up to the boundary.

For any $\widehat{w}\in\mathcal{W}$ sufficiently close to $w$ in the space $\LN$ we can define an invertible operator $\widehat{\Psi}_{\widehat{w}}\colon \WN\to \WN$ given by 
$$\widehat{\Psi}_{\widehat{w}}=\widehat{\Psi}_w+T_0(B_2(D_{\widehat{w}}-D_w)).$$
Note that $\widehat{\Psi}_{\widehat{w}}(v)$ is holomorphic if and only if $v_{\bar{\zeta}}+(B_1+B_2D_{\widehat{w}})v=0$. Hence one can construct $S_{\widehat{w}}$ and $\phi_{\widehat{w}}$ in a similar way as above. Moreover, one can uniformly bound the largest and the smallest eigenvalue of the matrix function $S_{\widehat{w}}$ (first one from above and the second one away from zero) by imposing a bound on the norm of $\widehat{\Psi}_{\widehat{w}}-\widehat{\Psi}_w$. Together with the precompactness assumption for $\mathcal{W}$ this leads to uniform bounds for all matrix functions $S_w$ and $(S_w)^{-1}$ for $w\in\mathcal{W}$ and the desired conclusion. 
\end{proof}

The maximum principle gives the following convergence result that will be applied in the inductive proof of the main theorem.

\begin{lemma}\label{lemmakonv}
Let $u_j\in\OBZ$ be a sequence that converges to $u$ uniformly on $b\D$ and uniformly on compact sets in $\D$. Then $u\in\OBZ$. 
\end{lemma}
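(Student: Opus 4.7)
The plan is to use the linearity of $\bar{\partial}_B$ together with the maximum principle of Theorem 5 to upgrade the given convergences to uniform convergence on all of $\overline{\D}$, and then to pass to the limit in the weak form (\ref{weak Pascali}) to identify $u$ as an element of $\OBZ$.

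Concretely, I set $w_{jk}:=u_j-u_k$. By linearity of $\bar{\partial}_B$ each $w_{jk}$ lies in $\OBZ$, and the hypotheses give $w_{jk}\to 0$ uniformly on $b\D$ and uniformly on every compact subset of $\D$ as $j,k\to\infty$; in particular $u_j(\zeta)\to u(\zeta)$ for every $\zeta\in\overline{\D}$. Assuming Theorem 5 applies to the tail family $\{w_{jk}\}_{j,k\ge N_0}$ with a uniform constant $C$, one obtains
\[\max_{\overline{\D}}|w_{jk}|\le C\max_{b\D}|w_{jk}|\longrightarrow 0,\]
so $\{u_j\}$ is Cauchy in $\mathcal{C}(\overline{\D},\C^n)$ and converges uniformly to a continuous function on $\overline{\D}$, which must equal $u$ by uniqueness of pointwise limits. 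To verify the Pascali equation for the continuous limit, pick any smooth vector function $\varphi$ with compact support in $\D$: the uniform convergence $u_j\to u$ on the compact support of $\varphi$ permits passage to the limit in (\ref{weak Pascali}) written for each $u_j$, yielding the weak equation for $u$. The bootstrapping recalled at the start of the section then gives $u\in\mathcal{C}^\infty(\D)$, and combined with continuity on $\overline{\D}$ we conclude $u\in\OBZ$.

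The hard part is verifying the precompactness hypothesis of Theorem 5 for the tail family $\{w_{jk}\}$ in $L^p(\D,\C^n)$. The data furnish uniform boundedness of $\{u_j\}$ on $b\D$ and on $\{|\zeta|\le r\}$ for each $r<1$, but not directly on the annular region $\{r<|\zeta|<1\}$. To close this gap I would use the PDE and Proposition \ref{inverse} to write $u_j=\widehat{\Psi}^{-1}(h_j)$ with $h_j=\widehat{\Psi}(u_j)$ holomorphic on $\D$, and exploit the bounded invertibility of $\widehat{\Psi}$ on $W^{1,p}(\D,\C^n)$ together with the rigidity of holomorphic functions to extract a uniform $L^p$ bound on the $u_j$. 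Dominated convergence then upgrades pointwise convergence to $L^p$ convergence, which is exactly the required precompactness.
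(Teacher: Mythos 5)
Your skeleton coincides with the paper's: pass to the differences $u_j-u_k$, invoke the maximum principle of Theorem 5 to upgrade the hypotheses to the Cauchy property in $\mathcal{C}(\overline{\D},\C^n)$, and identify $u$ as a solution by letting $j\to\infty$ in the weak form (\ref{weak Pascali}) tested against compactly supported $\varphi$. Those two steps are correct as written.

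The gap is in your final paragraph, where you try to verify the precompactness hypothesis of Theorem 5 for the family $\{u_j-u_k\}$ in $\LN$. The route through $h_j=\widehat{\Psi}(u_j)$ is circular: since $h_j=u_j+T_0(B_1u_j+B_2\overline{u_j})+L(u_j)$ and the Cauchy--Green transform $T_0$ integrates over all of $\D$, any estimate on $h_j$ (on $b\D$, on compacts, or in $\WN$) already presupposes control of $u_j$ in $\LN$ over the annuli $\{r<|\zeta|<1\}$ --- which is exactly the control you are missing. So the ``rigidity of holomorphic functions'' has nothing to act on, and ``dominated convergence'' has no dominating function until a uniform bound on the $u_j$ near $b\D$ is produced by some other means. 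The paper disposes of this point by asserting that precompactness of $\{u_j\}$ in $\LN$ ``is easy to check''; the elementary justification (and the one available in the sole application of the lemma, where all the maps take values in the bounded domain $\Omega$) is a uniform sup-norm bound on $\overline{\D}$ for the whole sequence, which together with pointwise convergence on $\D$ yields convergence, hence precompactness, in $\LN$ by dominated convergence. As the lemma is stated, you should either supply such a uniform bound or add it as a (harmless, in context) hypothesis; the $\widehat{\Psi}$-detour you propose does not produce it.
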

\begin{proof} 
The fact that $u\in\OB$ follows from the uniform compact convergence of the sequence $u_j\in\OB$ in $\D$ and the weak form (\ref{weak Pascali}). 
Moreover, it is easy to check that the set $\left\{u_j\right\}$ is precompact in $\LN$. Therefore we have
$$\max_{\overline{\D}}\left|u_j-u_k\right|\leq C \cdot\max_{b\D}\left|u_j-u_k\right|.$$
This gives the Cauchy propery of the sequence  $u_j$ in $\mathcal{C}(\overline{\D},\C^n)$.\end{proof}
\noindent

\section{Proof of Theorem \ref{main}}
The proof consists of two parts. In the first part, we start with a small solution of the Pascali system through the given point provided by Lemma \ref{small disc} and then we push its boundary into the vicinity of $b\Omega$. In the second part, we construct a sequence of solutions of the Pascali system whose boundary points converge towards $b\Omega$ and make sure that its limit is indeed a continuous map. 

By classical results from the convex geometry, see \cite{Rauch, Blaschke}, we have the following:
if $\kappa_{\min}$ and $\kappa_{\max}$ are the minimum and the maximum principal curvatures of points in $b\Omega$
with respect to the inner normal to $b\Omega$, then for each point $z\in b\Omega$, and the inner unit normal $\nu_z$ to $b\Omega$ at $z$ we have
\begin{equation}\label{ukriv}
\overline\B\left(z+\frac{1}{\kappa_{\max}}\nu_z,\frac{1}{\kappa_{\max}}\right)\subset\overline\Omega\subset \overline\B\left(z+\frac{1}{\kappa_{\min}}\nu_z,\frac{1}{\kappa_{\min}}\right).
\end{equation}

Since  $\Omega\Subset \mathbb{C}^n$ is a smoothly bounded strictly convex domain 
there are a strictly convex defining function $\rho\in\mathcal C^{\infty}(\overline\Omega)$, and  $c$, $0<c<\frac{1}{\kappa_{\max}}$, such that
\begin{eqnarray*}
\Omega=\{z\in\overline\Omega\colon \rho(z)<0\},\ \ d\rho(z)\ne 0 \text{ on }b\Omega,\\
\text{ for any } z\in\Omega \text{ with } \rho(z)>-c \text{ we have }\dist(z,b\Omega)=-\rho(z).
\end{eqnarray*}
We obtain such a defining function by gluing the signed distance function $\dist(\cdot,\Omega)-\dist(\cdot,\C^n\setminus\Omega)$, which is smooth and strictly convex near the boundary, with an appropriate smooth strictly convex function in the interior.  We denote by $p_0$ the minimum of $\rho$ in $\overline\Omega$, 
and for $\eta\in [\rho(p_0),0)$
by $\Omega_\eta$ the level set $\{z\in\Omega \colon \rho(z)=\eta\}$. If $q\ne p_0$, the level set $\Omega_{\rho(q)}$ is a smooth strictly convex hypersurface.

Since $1-c\kappa_{\min}\in (0,1)$ we can choose $\alpha\in (\frac 1 2,1)$, so close to $\frac 1 2$ that 
\begin{equation}\label{defin_d}
d=2\alpha (1-c\kappa_{\min})\in (0,1).
\end{equation}
Now we choose $\delta_1\in (0,c)$, such that 
\begin{equation}\label{defin_delta0}
\text{for every }x\in (0,2\kappa_{\min}\delta_1) \text{ we have }\sqrt{1-x}\ge 1-\alpha x.
\end{equation}

By Lemma \ref{small disc} there exists a non constant $u_0\in \OBZ$ such that $u_0(0)=p$ and $u_0(\overline{\D})\subset \Omega$. Moreover, without loss of generality we may assume that $p_0\notin u_0(b\D)$. Choose
$\tau\in(\rho(p_0),\min\{\rho(u_0(\zeta)),\zeta \in b\D\})$, $\tau<-\delta_1$.
By strict convexity of $\rho$ and compactness of $\{z\in\Omega\colon \rho(z)\in [\tau,-\delta_1]\}$, there is $\lambda>0$ such that for each $q\in\Omega$
with $ \rho(q)\in [\tau,-\delta_1]$ and for each $V\in T_q \Omega_{\rho(q)}$, $|V|=\frac {\delta_1} 2$,
we have $q+V\in \Omega$, and $\rho(q+V)> \rho(q)+\lambda$. That is, we have fixed a compact subset of $\Omega$ containing $u_0(\partial\D)$ such that any vector $V$ of length $\frac {\delta_1} 2$ that is tangent to any level set of $\rho$  within this set does not leave $\Omega$, and moreover, its endpoint is closer  to the
boundary  $b\Omega$ for at least $\lambda$.

We now push the boundary of $u_0$ closer to $b\Omega$. In particular, we construct a solution of the Pascali system $u_1\in \OBZ$ such that
\begin{equation}\label{equ1}
u_1(0)=p,\ u_1(\overline\D)\subset \Omega, \ \dist(u_1(\zeta),\partial \Omega)<\delta_1\text{ for all }\zeta \in b\D.
\end{equation}
This can be achieved in finitely many steps of the same kind, we explain the first one:
we choose a continuous non vanishing section $V$ of the complex vector bundle $T_{u_0(\zeta)}\Omega_{\rho(u_0(\zeta))}$ over $b\D$. By multiplying 
$V$ with a continuous non vanishing function we may assume that $|V|\le \frac{\delta_1}2 $, and that we have:
\begin{eqnarray}
|V(\zeta)|=\frac{\delta_1}2  \text{ for all }\zeta  \in b\D \text{ such that } \dist(u_0(\zeta),b\Omega)\ge \delta_1, \\ 
u_0(\zeta)+\xi V(\zeta)\in\Omega \text{ for all }\xi \in \overline\D \text{ and } \zeta \in b\D. \label{fm1}
\end{eqnarray}
We can extend $V$ continuously to $\overline\D$ so that (\ref{fm1}) holds for all $\zeta \in \overline\D$.
We apply Lemma \ref{RHlemma} for $\epsilon>0$ chosen small enough 
to obtain a map $w\in \OBZ$ with the following properties:
\begin{enumerate}
\item[(a1)]  $w(\overline\D)\subset \Omega$,
\item[(a2)]  $\rho(w(\zeta))>\rho(u_0(\zeta))+\lambda $ for all $\zeta\in b \D$ with $\dist(u_0(\zeta),b\Omega)\ge \delta_1$,
\item[(a3)] $\rho(w(\zeta))>\rho(u_0(\zeta))$  for all $\zeta\in b \D$,
\item[(a4)]  $w(0)=p$.
\end{enumerate}
In finitely many steps of this kind we obtain a solution to the Pascali system $u_1$
satisfying \eqref{equ1}.

In the second part, we construct inductively a sequence of solutions to the Pascali system $u_j\in \OBZ$,
 sequences  $\epsilon_j, \delta_j, \lambda_j, r_j$ such that:
\begin{enumerate}
\item[(b1)] $u_j(\overline\D)\subset \Omega$,
\item[(b2)] $\dist(u_j(\zeta),b\Omega)<\delta_{j}$  for $\zeta \in b\D$, 
\item[(b3)] $\lambda_{j-1}<\dist(u_j(\zeta),b\Omega)<\delta_{j-1}$ for  $\zeta$, $|\zeta|\in [r_{j-1},1]$,
\item[(b4)] for $z\in\Omega$ such that $\lambda_{j-1}<\dist(z,b\Omega)<\delta_{j-1}$ , and  $w\in\C^n$ such that  $|z-w|< {\epsilon_j}$ 
we have  $w\in\Omega$.
\item[(b5)] $r_j\in(\max\{1-\frac 1{2^{j}},r_{j-1}\},1)$ and $|u_{j+1}(\zeta)-u_j(\zeta)|<\frac{\epsilon_j}{2^j}$ for $\zeta$, $|\zeta|\le r_j$,
\item[(b6)] $|u_{j+1}(\zeta)-u_j(\zeta)|<\sqrt{2c\delta_{j}}$ for $\zeta \in b\D$,
\item[(b7)] $\delta_j<d\cdot \delta_{j-1}$ and  $\epsilon_{j-1}<\frac  1{2^{j-1}}$, 
\item[(b8)] $u_j(0)=p$.
\end{enumerate}

Note that $u_1$  and $\delta_1$ satisfy the properties (b1), (b2) and (b8) for $j=1$ by \eqref{equ1}.
For any $\delta_{-1}>\delta_0>\delta_1$, and $\epsilon_0>0$ satisfying (b7) for $j=1$ we may choose $0<r_0<1$ and $\lambda_0>0$ such that (b3) holds for $j=1$, 

Assume that for some $k\in\N$ we have already constructed $u_j$ and $\delta_j$ for $j=1,\ldots, k$, and $r_j$, 
$\lambda_j$, and $\epsilon_j$ for  $j=1,\ldots, k-1$ such that (b1), (b2), (b3), (b7) and (b8) hold for $j=1,\ldots, k$, and (b4), (b5), (b6) hold for $j=1,\ldots, k-1$.
For each $\zeta\in b\D$  let $q(\zeta)$ be the unique nearest point to $u_k(\zeta)$ in the boundary of $\Omega$.
Similarly to the first part, we choose a continuous non vanishing section $V_k$ of the complex vector bundle $T_{u_k(\zeta)}\Omega_{\rho(u_k(\zeta))}$ over $b\D$.
Multiplying  $V_k$ with a continuous non vanishing function we may assume that 
$|q(\zeta)+c\nu_{q(\zeta)} -(u_k(\zeta)+V_k(\zeta))|=c$. Recall that $c<\frac{1}{\kappa_{\max}}$, hence
$$\B\left(q(\zeta)+c\nu_{q(\zeta)},c\right)\subset \B\left(q(\zeta)+\frac{1}{\kappa_{\max}}\nu_{q(\zeta)},\frac{1}{\kappa_{\max}}\right)\subset\Omega.$$
This implies that $u_k(\zeta)+\xi\cdot V_k(\zeta)\in \Omega$ for all $\xi \in \overline\D$, $\zeta \in b\D$.
Moreover, by Pitagora's theorem for the triangle with vertices $u_k(\zeta)$, $u_k(\zeta)+\xi \cdot V_k(\zeta)$, $q(\zeta)+c\nu_{q(\zeta)}$, 
and by (b2) for $j=k$ we get 
\begin{equation}\label{equV_j}
|V_k(\zeta)|=\sqrt{c^2-(c-\dist(u_k(\zeta),b\Omega))^2}<\sqrt{2c\delta_{k}} \text{ for }\zeta \in b\D.
\end{equation}
Since we move in the direction tangent to $\Omega_{\rho(u_k(\zeta))}$, by convexity we also have
\begin{equation}\label{note}
\dist(u_k(\zeta)+\xi\cdot V_k(\zeta),b\Omega)\le\dist(u_k(\zeta),b\Omega) \text{ for }\xi\in \overline\D, \zeta \in b\D.
\end{equation}
Moreover, we can extend $V_k$ continuously to $\overline\D$ so that 
\begin{equation}\label{note2}
u_k(\zeta)+\xi \cdot V_k(\zeta)\in\Omega  \text{ for } \xi \in \overline\D, \zeta \in \overline\D.
\end{equation}

Let us estimate the distance between the point $u_k(\zeta)+\xi \cdot V_k(\zeta)$ $\xi \in b\D$, $\zeta \in b\D$, and $b\Omega$. By \eqref{ukriv} this distance is smaller or equal to the distance between the same point and the set $b\B\left(q(\zeta)+\textstyle{\frac{1}{\kappa_{\min}}}\nu_{q(\zeta)}\right)$. By Pitagora's theorem for the triangle with vertices $u_k(\zeta)$, $u_k(\zeta)+\xi \cdot V_k(\zeta)$, $q(\zeta)+1/\kappa_{\min}\nu_{q(\zeta)}$ this implies the following estimate
$$
\dist(u_k(\zeta)+\xi V_k(\zeta), b\Omega)
\leq \textstyle{\frac{1}{\kappa_{\min}}} -\sqrt{( \textstyle{\frac{1}{\kappa_{\min}}} -\dist(u_k(\zeta), b\Omega)
)^2+|V_k(\zeta)|^2}. $$
$$\leq\textstyle{\frac{1}{\kappa_{\min}}} \left(1-\sqrt{1-2\kappa_{\min}\dist(u_k(\zeta), b\Omega)+2\kappa_{\min}^2\dist(u_k(\zeta), b\Omega)|V_k(\zeta)|}\right),$$
 where in the last inequality we apply $a^2+b^2\geq 2ab$. By \eqref{defin_delta0} it follows that
$$\dist(u_k(\zeta)+\xi V_k(\zeta), b\Omega)\leq 2\alpha\cdot\dist(u_k(\zeta), b\Omega)(1-\kappa_{\min}|V_k(\zeta)|),$$
while from (b2) and \eqref{defin_d} we get 
$$\dist(u_k(\zeta)+\xi V_k(\zeta), b\Omega)\leq 2\alpha\cdot \delta_k (1-\kappa_{\min} c)=d\delta_k.$$
We can choose $0<\delta_{k+1}<d \delta_k$ such that 
\begin{equation}\label{note2.5}
\dist(u_k(\zeta)+\xi V_k(\zeta), b\Omega)<\delta_{k+1}  \text{ for } \xi \in b\D, \zeta \in b\D.
\end{equation}

For sufficiently small $\epsilon_k>0$ to be chosen later, we apply Lemma \ref{RHlemma} for $V=V_k$, $u=u_k$, $r_0=\max\{1-\frac 1{2^{k-1}},r_{k-1}\}$, $\epsilon=\frac {\epsilon_k}{2^k}$
and obtain the map $u_{k+1}=w$ and $r_k=r'\in (r_0,1)$. 
The property (b8) for $j=k+1$ follows from (iv) and
the property (b5)  for $j=k$ follows from  (iii).
For all sufficiently small $\epsilon_k>0$ 
the property (b1) for $j=k+1$ follows from \eqref{note2}, (ii) and (iii);
the property (b2)  for $j=k+1$ follows from \eqref{note2.5} and (i);
for any $\lambda_k>0$ small enough the property (b3)  for $j=k+1$ follows from \eqref{note} and (ii); 
the property (b6)  for $j=k$ follows from \eqref{equV_j} and (i).
We choose $\epsilon_k\in (0,\frac 1{2^{k}})$ so small that all of the above holds, and furthermore, that 
the property (b4)  is satisfied for $j=k$.
Then the property (b7) holds for $j=k+1$ and this completes the inductive step. 

Fix an integer $j>0$ and take $|\zeta|\le r_j$. Then for any integer $k>0$ we have by (b5): 
\begin{eqnarray*}
|u_{j}(\zeta)-u_{j+k}(\zeta)|&\le& |u_{j}(\zeta)-u_{j+1}(\zeta)|+\cdots +|u_{j+k-1}(\zeta)-u_{j+k}(\zeta)|
 \\
&\le& \frac{\epsilon_j}{2^j}+\cdots + \frac{\epsilon_{j+k-1}}{2^{j+k-1}}\le \epsilon_j <\frac 1{2^{j}}
\end{eqnarray*}
Therefore, the sequence $u_j$ converges uniformly on compacta on $\D$, and for the limit map  $u$
it holds
\begin{equation}\label{epsilonj}
|u_{j}(\zeta)-u(\zeta)|\le \epsilon_j \text { for all }|\zeta|\le r_j.
\end{equation}
By (b3), (b4), and \eqref{epsilonj}  we get that $u(\zeta)\in\Omega$. 
This implies that $u(\D)\subset \Omega$. 
By (b8) we have $u(0)=p$.
By (b6) and (b7) we get 
$$|u_{j+1}(\zeta)-u_j(\zeta)|\le \sqrt{2c\delta_{1}}(\sqrt{d})^{j-1}  \text{ for all } \zeta \in b\D.$$
Since $d<1$ (see \eqref{defin_d})
this further implies that the sequence $u_j$ is uniformly convergent on $b\D$, and by (b2) we have $u(b\D)\subset b\Omega$. We have proved that the sequence $u_n\in\OBZ$ converges uniformly on compact set in $\D$ and uniformly on $b\D$, thus by Lemma \ref{lemmakonv} the limit map $u$ lies in $\OBZ$.
Therefore, the map $u$ satisfies all properties in the theorem.
\qed

\end{document}